\newtheorem{theorem}{Theorem}[section]
\newtheorem{lemma}[theorem]{Lemma}
\newtheorem{remark}[theorem]{Remark}
\numberwithin{equation}{section}
\def\Z{\mathbb{Z}}
\def\Q{\mathbb{Q}}
\DeclareMathOperator{\rad}{rad}
\DeclareMathOperator{\Cl}{Cl}
\def\pmod#1{\allowbreak\mkern5mu({\operator@font mod}\,\,#1)}
\keywords{pure fields, real Kummer extensions, genus field, genus number, counting fields, arithmetic statistics.}
\subjclass[2020]{Primary: 11R45, 11N45; Secondary: 11R29}
\begin{document}

\title{The average genus number for pure fields of prime degree}

\author{Sambhabi Bose}
\address{Department of Mathematics\\University of California, Berkeley\\Berkeley, CA 94720, US}
\email{sbose812@berkeley.edu}

\author{Kevin J. McGown}
\address{Department of Mathematics and Statistics\\California State University, Chico\\Chico, CA 95929, US}
\email{kmcgown@csuchico.edu}

\author{Ishan Panpaliya}
\address{Department of Mathematics\\Seattle University\\Seattle, WA 98122, US}
\email{ipanpaliya@seattleu.edu}

\author{Natalie Welling}
\address{Department of Mathematics\\University of Massachusetts Amherst\\Amherst, MA 01003, US}
\email{nwelling@umass.edu}

\author{Laney Williams}
\address{Department of Mathematics and Statistics\\California State University, Chico\\Chico, CA 95929, US}
\email{lcwilliams1@csuchico.edu}

\date{\today}

\begin{abstract}
Let $\ell\geq 5$ be prime.
Let $\mathcal{F}_\ell$ be the collection of (isomorphism classes of) pure number fields $\Q(\sqrt[\ell]{a})$ of degree $\ell$, ordered by the absolute value of their discriminant.  In 2018, Benli proved a counting theorem for $\mathcal{F}_\ell$, generalizing a previous theorem of Cohen and Morra when $\ell=3$. We prove that the proportion of pure fields of degree $\ell$ with genus number one is asymptotic to $(A_\ell \log X)^{-1}$ and that the average genus number for pure fields of degree $\ell$ is asymptotic to $B_\ell(\log X)^{\ell-1}$. 
Both $A_\ell$ and $B_\ell$ are expressed explicitly as a product over primes.
\end{abstract}

\bigskip
\maketitle

\section{Introduction}

The subject of counting number fields, and more generally arithmetic statistics, has been one of intense interest over the last few decades.
Given a collection of number fields $\mathcal{F}$ and a
function $f \colon \mathcal{F}\to\mathbb{R}_{\geq 0}$, one can ask for an asymptotic (or the limiting value) for the 
expression
$$
  \frac{\sum_{\substack{K\in\mathcal{F}\\|\Delta_K|\leq X}}f(K)}{\sum_{\substack{K\in\mathcal{F}\\|\Delta_K|\leq X}}1}
  \,.
$$
Here our fields $K$ are ordered by the absolute value of the discriminant $\Delta_K$,
but alternative height functions could be considered.
Examples of functions $f(K)$ of very high interest are
the class number $h_K$, the indicator function for $h_K=1$,
or the size of the $p$-torsion of the class group $\#(\Cl_K[p])$.
A prototypical example is the famous Davenport--Heilbronn theorems
(see~\cite{MR491593,MR3090184,MR3127806,MR4768704}) which
allow one to obtain the average of $\#(\Cl_K[3])$ over the collection of quadratic fields of fixed signature.
However, many such questions are completely out of reach.
Recently, McGown--Tucker, Kim, and Frei--Loughran--Newton studied the statistics of the
genus number $g_K$ (see~\cite{MR4613609,MR4649640,MR4048607,MR4598183}).  This quantity is supported at the bad primes,
in the language of the Cohen--Lenstra heuristics (see~\cite{MR756082}),
and is much more amenable to study than $h_K$.

In light of Malle's conjecture (see~\cite{MR1884706,MR2068887}), it is most common to take $\mathcal{F}$ to consist of all
degree $d$ extensions of $\Q$ having a specified Galois closure $G$.  In this case,
Malle's conjecture predicts that the count of such extensions with $|\Delta_K|\leq X$
is asymptotic to $CX^{\alpha}(\log X)^\beta$
for appropriate choices of $0<\alpha\leq 1$, $\beta\geq 0$, and $C>0$.
(One can also consider the more general problem where $\Q$ is replaced by any base field $k$.)

On the other hand, there are other interesting collections of fields that one might consider.
In this paper we will be concerned with the collection of pure fields of prime degree~$\ell$.
Let $\mathcal{F}_\ell$
denote the collection of all (isomorphism classes of) degree $\ell$ number fields 
of the form $K=\Q(\sqrt[\ell]{a})$ with $a\in\Z$.
Note that we must have that $a\in\Z$ is not an $\ell$-th power; moreover,
we may assume that $a$ is positive and $\ell$-free, which we will tacitly do throughout.
Define the counting function
$$
N_\ell(X)=\#\{K\in\mathcal{F}_\ell: |\Delta_K|\leq X\}
\,.
$$
Benli proves (see~\cite{MR3814003}) that for all $\ell \geq 5$, the asymptotic
\begin{equation}\label{E:Benli}
  N_\ell(X)\sim C_\ell X^{\frac{1}{\ell-1}}(\log X)^{\ell-2}
  \,
\end{equation}
holds for an explicit constant $C_\ell$.  Actually, she proves something stronger,
but at present we will only be concerned with asymptotics.
We note that Cohen and Morra had previously treated the case of $\ell=3$ (see~\cite{MR2745550}).
The aim of this paper is to determine
the proportion of fields with genus number one and
the average genus number for the family $\mathcal{F}_\ell$.
More precisely, we seek an asymptotic as $X$ tends to infinity for these quantities.

\begin{theorem}\label{T:one}
The proportion of pure fields of prime degree $\ell\geq 3$ with genus number one equals
$$
  \frac{\#\{K\in\mathcal{F}_\ell:g_K=1\,,\;|\Delta_K|\leq X\}}{\#\{K\in\mathcal{F}_\ell:|\Delta_K|\leq X\}}
  \sim
  \frac{1}{A_\ell \log X},
$$
where
$$
  A_\ell=\frac{1}{(\ell-1)(\ell-2)}
  \prod_p
  A_{\ell,p}
  \,,\quad
  A_{\ell,p}=
  \left(1-\frac{1}{p}\right)
  \begin{cases}
  1+\frac{\ell-1}{p}, & p\equiv 1\pmod{\ell}\,,\\
1, & p\not\equiv 1\pmod{\ell}\,.
  \end{cases}  
$$  
\end{theorem}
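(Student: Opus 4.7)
The strategy is to translate the condition $g_K = 1$ into a multiplicative constraint on the integer $a$ parametrizing $K = \Q(\sqrt[\ell]{a})$, count the resulting $a$'s with $|\Delta_K|\le X$ via a Dirichlet-series argument, and then divide by Benli's asymptotic~\eqref{E:Benli}.

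For the genus-number step, I would apply genus theory to the Galois closure $L = K(\zeta_\ell)$ over $F = \Q(\zeta_\ell)$. Since $L/F$ is a degree-$\ell$ Kummer extension, standard Kummer-theoretic computations identify its genus field over $F$ as (essentially) the compositum $L(\sqrt[\ell]{p_1},\ldots,\sqrt[\ell]{p_t})$, where $p_1,\ldots,p_t$ are those rational primes dividing $a$ that split completely in $F$, i.e.\ $p_i\equiv 1\pmod{\ell}$. Descending from $L/F$ to $K/\Q$ through the action of $\mathrm{Gal}(F/\Q)$ on the $\ell$-elementary part of the relevant class group then yields a formula of the shape $g_K = \ell^{t(a) - 1 + \delta(a,\ell)}$, with $t(a)=\#\{p\mid a: p\equiv 1\pmod{\ell}\}$ and $\delta(a,\ell)$ a bounded correction coming from the prime $\ell$ (and possibly the archimedean place). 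In particular, $g_K = 1$ imposes a strong restriction on the number of prime divisors of $a$ congruent to $1$ modulo $\ell$.

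For the counting step, I would parametrize $\mathcal{F}_\ell$ as in Benli by $\ell$-free integers $a$ (equivalently, by factorizations $a = b_1 b_2^2 \cdots b_{\ell-1}^{\ell-1}$ into pairwise coprime squarefree $b_i$), so that $|\Delta_K| = \ell^{\star}\,\rad(a)^{\ell-1}$ up to a bounded factor at $\ell$, and $|\Delta_K|\le X$ is essentially $\rad(a)\le Y = X^{1/(\ell-1)}$. The resulting weighted sum over $a$ has an associated Dirichlet series whose Euler factors split by the congruence class of $p$ modulo $\ell$: at primes $p\not\equiv 1\pmod{\ell}$ all $\ell-1$ nonzero exponent classes remain available, while at primes $p\equiv 1\pmod{\ell}$ the Step~1 constraint truncates the contribution. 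A Landau/Selberg--Delange-type theorem applied to this series, which has a pole of order exactly $\ell-2$ at $s=1$ (one lower than the pole of order $\ell-1$ controlling Benli's total), then produces an asymptotic $\sim c_\ell\, Y(\log Y)^{\ell-3}$.

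Dividing by~\eqref{E:Benli} yields a proportion $\sim 1/(A_\ell\log X)$. The Euler product $\prod_p A_{\ell,p}$ emerges as the prime-by-prime ratio of local factors between the two Dirichlet series: for $p\not\equiv 1\pmod{\ell}$ the local factors agree and one picks up only the Tauberian normalization $(1-1/p)$, whereas for $p\equiv 1\pmod{\ell}$ an extra factor $1+(\ell-1)/p$ records the excluded $p$-contribution to $t(a)$. The prefactor $1/((\ell-1)(\ell-2))$ is the ratio of the leading Tauberian constants for the two poles. The principal technical obstacle is establishing the explicit genus-number formula, especially the correction $\delta(a,\ell)$ at the prime $\ell$ and at infinity, which demands a careful ray-class-group or idele-theoretic calculation; a secondary subtlety is tracking the $(\Z/\ell)^\times$ action on exponents so that the counting sum enumerates isomorphism classes of fields rather than tuples of data.
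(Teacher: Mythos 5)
Your counting framework (Dirichlet series with Euler factors split by the residue of $p$ modulo $\ell$, a Selberg--Delange/Landau Tauberian theorem for a pole of order $\ell-2$ against Benli's order $\ell-1$, then take the ratio) is sound and runs parallel to the paper's argument, which does the same thing with Wirsing's theorem in place of Selberg--Delange. The constant bookkeeping you describe is also essentially right, except that one of the two factors in the prefactor $1/((\ell-1)(\ell-2))$ comes not from the Tauberian constants but from the change of variables $\log Y=\tfrac{1}{\ell-1}\log X$ when passing from $\rad(a)\leq Y$ back to $|\Delta_K|\leq X$.

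The genuine gap is in your first step, and it is not a removable technicality: your proposed genus-number formula $g_K=\ell^{t(a)-1+\delta(a,\ell)}$ has the wrong shape. The correct formula (Ishida; Lemma~\ref{L:Ishida} in the paper) is $g_K=\prod_{p\mid a,\,p\neq\ell}(\ell,p-1)=\ell^{\widehat{\omega}(a)}$ --- there is no $-1$ and no correction at $\ell$ or at infinity. The $\ell^{t-1}$ shape you are recalling is the genus number of the Kummer extension $L/F$ with $F=\Q(\zeta_\ell)$; descending to $K/\Q$ (where the genus field is $Kk^*$ with $k^*$ absolutely abelian, so only primes $p\equiv 1\pmod{\ell}$, which admit a degree-$\ell$ cyclic subfield of $\Q(\zeta_p)$, can contribute) removes the $-1$. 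This matters because everything downstream hinges on the equivalence $g_K=1\iff$ no prime $p\equiv 1\pmod{\ell}$ divides $a$. If your formula with $\delta$ possibly $0$ were correct, then $g_K=1$ would also admit $a$ with exactly one such prime factor, the relevant Dirichlet series would acquire an extra logarithmic singularity, and the asymptotic would change (by a $\log\log$ factor), contradicting the stated theorem. So you must either cite Ishida's formula or actually carry out the descent; you cannot leave $\delta(a,\ell)$ undetermined and still assert that the pole drops by exactly one order with local factor ratio $1+(\ell-1)/p$ at $p\equiv 1\pmod{\ell}$. A secondary gap: the condition $|\Delta_K|\leq X$ is not simply $\rad(a)\leq Y$, because the power of $\ell$ in $\Delta_K$ depends on whether $a^{\ell-1}\equiv 1\pmod{\ell^2}$ --- a condition on $a$ itself, not on $\rad(a)$. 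The paper detects this with characters modulo $\ell^2$ and must separately bound the resulting non-principal twisted sums (showing they contribute $O\bigl(Y(\log Y)^{1-4/\ell}\bigr)$); your "up to a bounded factor at $\ell$" elides an argument that genuinely has to be made, though it would fit inside your framework.
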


We remark that the $A_{\ell,p}$ can also be written as a single formula in terms of the
$\ell$-th power residue symbol, and the same is true for $B_{\ell,p}$ in the following theorem.

\begin{theorem}\label{T:average}
The average genus number over pure fields of prime degree $\ell\geq 3$ satisfies
$$
  \frac{\sum_{\substack{K\in\mathcal{F}_\ell\\|\Delta_K|\leq X}}g_K}{\sum_{\substack{K\in\mathcal{F}_\ell\\|\Delta_K|\leq X}}1}
  \sim
  B_\ell (\log X)^{\ell-1}
  \,,\quad
  B_\ell=\frac{(\ell-2)!}{(2\ell-3)!(\ell-1)^{\ell-1}}\prod_p B_{\ell,p}
  \,,
$$
where
$$
B_{\ell,p}=\left(1-\frac{1}{p}\right)^{\ell-1}
\begin{cases}
\left(1+\frac{\ell(\ell-1)}{p}\right)\left(1+\frac{\ell-1}{p}\right)^{-1}, & p\equiv 1\pmod{\ell}\,,\\
1, & p\not\equiv 1\pmod{\ell}\,.
\end{cases}
$$
\end{theorem}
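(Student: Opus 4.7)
The plan is to adapt Benli's Kummer-theoretic parametrization of $\mathcal{F}_\ell$, but now to weight the count of fields by $g_K$. I would proceed through three steps: (i) an explicit multiplicative formula for $g_K$ in terms of the prime factorization of the radicand; (ii) expression of the weighted sum as an $(\ell-1)$-variable Dirichlet series with an Euler product; and (iii) a Tauberian extraction of the main term, followed by division by Benli's asymptotic~\eqref{E:Benli}.

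First, I would derive a formula for $g_K$ when $K=\Q(\sqrt[\ell]{a})$. Applying genus theory to the Kummer extension $K(\zeta_\ell)/\Q(\zeta_\ell)$ and descending via the $(\Z/\ell\Z)^\times$-Galois action, each prime $p\mid a$ with $p\equiv 1\pmod\ell$ contributes an independent factor of $\ell$ to $g_K$, while primes $p\mid a$ with $p\not\equiv 1\pmod\ell$ contribute nothing; there is in addition a bounded local correction at $p=\ell$ depending on $v_\ell(a)$ and $a\bmod\ell^2$. Next, I would parametrize $K\in\mathcal{F}_\ell$ by tuples $(b_1,\dots,b_{\ell-1})$ of pairwise coprime squarefree positive integers via $a=b_1b_2^2\cdots b_{\ell-1}^{\ell-1}$, modulo the $(\Z/\ell\Z)^\times$-action identifying isomorphic fields, so that $|\Delta_K|\asymp \ell^{c}(b_1\cdots b_{\ell-1})^{\ell-1}$ and $g_K$ depends only on which prime divisors of $b_1\cdots b_{\ell-1}$ lie in the class $\equiv 1\pmod\ell$.

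Writing $D(s)=\sum g_K(a)\,(b_1\cdots b_{\ell-1})^{-s}$, one obtains an Euler product whose local factor at $p\neq\ell$ comes from summing over the $\ell-1$ possible placements of $p$ among the $b_i$, weighted by its genus contribution: this yields $1+\ell(\ell-1)p^{-s}$ at $p\equiv 1\pmod\ell$ and $1+(\ell-1)p^{-s}$ at $p\not\equiv 1\pmod\ell$. By Chebotarev, $D(s)$ has a pole of order $2(\ell-1)$ at $s=1$: the primes $\equiv 1\pmod\ell$ (density $1/(\ell-1)$) contribute order $\ell$, while the remaining primes contribute order $\ell-2$. A Selberg--Delange or Perron-type argument then yields
\[
\sum_{b_1\cdots b_{\ell-1}\le Y} g_K \;\sim\; \frac{\widetilde C_\ell}{(2\ell-3)!}\,Y\,(\log Y)^{2\ell-3}
\]
for an explicit Euler product $\widetilde C_\ell$; the analogous treatment recovers~\eqref{E:Benli} with $C_\ell$ carrying a factor of $1/(\ell-2)!$. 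Setting $Y=X^{1/(\ell-1)}$ and taking the ratio produces $(\log X)^{\ell-1}$; the combinatorial prefactor in $B_\ell$ emerges from $(\ell-2)!/(2\ell-3)!$ together with the factor $(\ell-1)^{-(\ell-1)}$ coming from raising $\log Y=\log X/(\ell-1)$ to the net power $(2\ell-3)-(\ell-2)=\ell-1$. After factoring $\zeta(s)^{\ell-1}$ out of the denominator series, the local factors of the ratio assemble into exactly $\prod_p B_{\ell,p}$.

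The principal obstacle is the analysis in step (iii): $D(s)$ is multi-variate in $\ell-1$ coprime squarefree variables and has a pole of order $2(\ell-1)$ rather than $\ell-1$, so Benli's Tauberian argument must be reworked rather than cited. One must control off-diagonal contributions in the Perron inversion, verify sufficient analytic continuation past $\Re(s)=1$, and track constants to confirm that the local factors assemble exactly into $B_{\ell,p}$. A secondary difficulty is the contribution at $p=\ell$: since the stated $B_\ell$ contains no exceptional Euler factor at $\ell$, the local $\ell$-adic analysis must collapse cleanly into $B_{\ell,\ell}$, which requires a careful case analysis of $v_\ell(a)$ and $a\bmod\ell^2$.
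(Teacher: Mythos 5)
Your proposal reaches the theorem by a genuinely different analytic route than the paper, and the numerology all checks out: the Euler factors $1+\ell(\ell-1)p^{-s}$ at $p\equiv 1\pmod{\ell}$ and $1+(\ell-1)p^{-s}$ otherwise are exactly the paper's local factors $\widehat{D}_{\ell,p}$, the pole order $2(\ell-1)$ and hence the exponent $2\ell-3$ are correct, and the bookkeeping $(\ell-2)!/(2\ell-3)!$ together with $(\ell-1)^{-(\ell-1)}$ from $\log Y=\frac{1}{\ell-1}\log X$ raised to the net power $\ell-1$ is precisely how the paper assembles $B_\ell$. The difference is the engine: you propose the Cohen--Morra/Benli framework (multivariate Dirichlet series over coprime squarefree tuples $(b_1,\dots,b_{\ell-1})$, Selberg--Delange/Perron), whereas the paper deliberately avoids this by summing over $n=\rad(a)$ and invoking Wirsing's theorem (Lemma~\ref{L:Wirsing}), which needs only the first-moment input $\sum_{p\le Y}f(p)\sim(2\ell-2)Y/\log Y$ and returns the asymptotic with the Euler-product constant directly. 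This is why the principal obstacle you identify --- reworking the Tauberian argument for a pole of order $2(\ell-1)$, analytic continuation past $\Re(s)=1$, off-diagonal control --- simply does not arise in the paper; if you pursue your route you really would have to do that work, and it is substantially heavier than the theorem requires.

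One point in your sketch is underspecified in a way that matters for constants. Writing $|\Delta_K|\asymp \ell^{c}(b_1\cdots b_{\ell-1})^{\ell-1}$ loses the asymptotic constant entirely: by Lemma~\ref{L:field1} the power of $\ell$ is $\ell-2$ or $\ell$ according to whether $a^{\ell-1}\equiv 1\pmod{\ell^2}$, and this is a condition on $a$ itself, not on $\rad(a)$, so it couples the summation cutoff for $n$ to the residue of $a$ modulo $\ell^2$. It therefore cannot be packaged as a local Euler factor at $\ell$ of your series $D(s)$; the paper instead detects it by orthogonality of characters mod $\ell^2$ and shows the non-principal characters contribute a smaller power of $\log$ (here $O(Y(\log Y)^{3-4/\ell})$, which beats $(\log Y)^{2\ell-3}$). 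You are partially rescued by the fact that you only want the ratio: the resulting prefactor $\frac{a(\ell)-b(\ell)}{\ell}c(\ell)+b(\ell)$ is identical for the weighted and unweighted counts and cancels, leaving exactly $B_{\ell,\ell}=(1-1/\ell)^{\ell-1}$ at the prime $\ell$ as the statement requires --- but to see that cancellation you must set up the mod-$\ell^2$ splitting consistently in both numerator and denominator rather than absorbing it into an unspecified local correction.
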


In Section~\ref{S:genus.background} we will give the definition of the
genus number and summarize previous results concerning statistics of genus numbers.
In Section~\ref{S:prelim}
we state some preliminary lemmas that will be required in the proofs of our results.
In Section~\ref{S:counting} we give a brief proof of the asymptotic for $N_\ell(X)$
given in (\ref{E:Benli}). Although our argument differs from the one given in~\cite{MR3814003},
the main reason for presenting the proof in its entirety
is that this will serve as an outline of the proof strategy we will employ in later sections.
Finally, in Sections~\ref{S:one} and~\ref{S:average} we prove Theorems~\ref{T:one}
and~\ref{T:average}, respectively.

\section{Background on the genus number}\label{S:genus.background}

We adopt the notation from the previous section, including writing $\mathcal{F}_\ell$ 
for the collection of pure fields of prime degree $\ell$. Throughout the paper, $\ell$ will always denote an odd prime.

We write $h_K$ and $h_K^+$ to denote the class number and narrow class number, respectively.
Fr\"ohlich (see~\cite{MR113868}) defined the genus field of a number field $K$
to be the maximal extension $K^*$ of $K$ that is unramified at all finite primes and is a compositum of the form $Kk^*$ where $k^*$ is absolutely abelian. 
The genus number of $K$ is defined as $g_K=[K^*:K]$,
and by class field theory $g_K\mid h^+_K$.  Moreover,
since $h_K$ and $h_K^+$ differ by a power of $2$, it follows from
Lemma~\ref{L:Ishida} that for $K\in\mathcal{F}_\ell$,
one has $g_K\mid h_K$.  In fact, this holds whenever $K$ has
odd prime degree.

Here we will briefly indicate previous works that have considered the study of the genus number
of a number field from a statistical perspective.  This section can be skipped by those who want to proceed directly to the proofs.
In~\cite{MR4613609}, the family of all cubic fields was considered.
As $S_3$-cubic fields make up 100\% of all cubic fields, from the perspective of that paper
it was only necessary to treat the case of $S_3$-cubic fields.  However, interesting subfamilies
of cubic fields include the cyclic cubic fields and the pure cubic fields.
We note that the collection of pure cubic fields $\mathcal{F}_3$ is the subfamily 
of $S_3$-cubic fields whose Galois closure contains $\Q(\sqrt{-3})$.

In~\cite{MR4048607}, the cases of cyclic and dihedral fields of any prime degree were considered.
The dihedral case is more difficult and therefore asymptotics are conditional on conjectures for $p$-torsion in class groups.
In~\cite{MR4649640}, the family of all quintic fields was considered.
As in the cubic case, the $S_5$-quintic fields constitute 100\% of all quintic fields,
and hence that work only considered $S_5$-quintic extensions. As $\mathcal{F}_5$ (the collection of pure quintic fields) is a subcollection of those
quintic fields whose Galois closure is the Frobenius group of order $20$,
these fields were not treated in that work.
Indeed, one can show that the Galois closure of $K\in\mathcal{F}_\ell$
is isomorphic to a semidirect product $C_\ell\rtimes C_{\ell-1}$.
Finally, in~\cite{MR4598183}, the case of all abelian extensions was considered.
In that work, results over an arbitrary base field $k$ were obtained.

\section{Preliminaries}\label{S:prelim}

Given $n\in\Z^+$, we will write $\rad(n)$ to denote the radical of $n$, and $\omega(n)$ for the number of distinct prime divisors of $n$.  We will refer to $n$ as $\ell$-free if it is not divisible by an $\ell$-th power; that is, $n$ is $\ell$-free if $\nu_p(n)<\ell$ for all primes $p$,
where $\nu_p$ denotes the $p$-adic valuation.
  
The following gives a formula for the discriminant of the fields
in $\mathcal{F}_\ell$ (see~\cite{MR184933,MR1205901}).

\begin{lemma}\label{L:field1}
For $K=\Q(\sqrt[\ell]{a})\in\mathcal{F}_\ell$ with $\rad(a)=n$, we have
\begin{align*}
  |\Delta_K|=\begin{cases}
  \ell^{\ell-2}n^{\ell-1}, & a^{\ell-1}\equiv 1\pmod{\ell^2},\\
  \ell^\ell n^{\ell-1}, & a^{\ell-1}\not\equiv 1\pmod{\ell^2}.
  \end{cases}
\end{align*}
\end{lemma}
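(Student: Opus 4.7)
The plan is to compute $\nu_p(|\Delta_K|)$ at each prime $p$ by analyzing the completion $K\otimes_\Q\Q_p$, and then assemble. Primes $p\nmid a\ell$ contribute nothing, and each prime $p\mid a$ with $p\neq\ell$ contributes exactly $\ell-1$: since $a$ is $\ell$-free, $\nu_p(a)\in\{1,\dots,\ell-1\}$ is coprime to $\ell$, so $\sqrt[\ell]{a}$ has $p$-adic valuation with reduced denominator $\ell$, forcing total tame ramification with $e=\ell$. This accounts for the factor of $n^{\ell-1}$ coming from primes other than $\ell$.

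The substantive calculation is at $p=\ell$, where I would split into three subcases. In the first, assume $\ell\nmid a$ and $a^{\ell-1}\equiv 1\pmod{\ell^2}$. Writing $a=\omega u$ with $\omega\in\mu_{\ell-1}(\Z_\ell)$ and $u=1+\ell v\in 1+\ell\Z_\ell$, one computes $a^{\ell-1}=u^{\ell-1}\equiv 1-\ell v\pmod{\ell^2}$, so the hypothesis amounts to $u\in 1+\ell^2\Z_\ell$. The identity $(1+\ell x)^\ell\equiv 1+\ell^2 x\pmod{\ell^3}$ (and iteration) then shows $u$ is an $\ell$-th power in $\Z_\ell^\times$; since $\omega$ is always an $\ell$-th power (as $\gcd(\ell,\ell-1)=1$), so is $a$, say $a=c^\ell$. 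Then $x^\ell-a=(x-c)h(x)$ with $h$ irreducible of degree $\ell-1$ over $\Q_\ell$, so $K\otimes\Q_\ell\cong\Q_\ell\times\Q_\ell(\zeta_\ell)$; the second factor is tamely totally ramified of degree $\ell-1$, contributing $\ell-2$ to $\nu_\ell(|\Delta_K|)$.

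In the second subcase, $\ell\nmid a$ and $a^{\ell-1}\not\equiv 1\pmod{\ell^2}$, so $v\in\Z_\ell^\times$ in the notation above. Since $\sqrt[\ell]{\omega}\in\Q_\ell$, the completion is $K_v=\Q_\ell(\sqrt[\ell]{u})$; setting $\eta=\sqrt[\ell]{u}-1$, a brief Newton-polygon argument shows $\eta$ is a uniformizer with Eisenstein minimal polynomial $g(x)=(x+1)^\ell-u$. Then $\nu_{K_v}(g'(\eta))=\nu_{K_v}(\ell(\eta+1)^{\ell-1})=\ell$, so $\nu_\ell(|\Delta_K|)=\ell$. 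In the third subcase, $\ell\mid a$; write $a=\ell^k m$ with $1\leq k\leq\ell-1$ and $\gcd(\ell,m)=1$, and pick $r,s\in\Z$ with $kr-\ell s=1$. Then $\gamma=(\sqrt[\ell]{a})^r/\ell^s$ satisfies $\gamma^\ell=\ell m^r$, which is Eisenstein, so $\gamma$ is a uniformizer with $\nu_{K_v}(\ell\gamma^{\ell-1})=2\ell-1$. Combined with $\nu_\ell(n^{\ell-1})=\ell-1$, the total $\ell$-contribution to $|\Delta_K|$ is $\ell^\ell$.

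Assembling the local data yields both cases of the lemma, noting that $a^{\ell-1}\equiv 1\pmod{\ell^2}$ forces $\ell\nmid a$. The main obstacle is the wild second subcase, where the naive candidate $\sqrt[\ell]{a}$ is a unit; a uniformizer must instead be built by stripping off the Teichm\"uller piece and forming $\sqrt[\ell]{u}-1$ to expose an Eisenstein presentation.
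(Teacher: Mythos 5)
Your proof is correct. The paper itself does not prove Lemma~\ref{L:field1}; it simply cites the literature, so there is no in-text argument to compare against. Your local computation is the standard route to this formula and all the steps check out: the tame primes $p \mid n$, $p \neq \ell$ each contribute $p^{\ell-1}$ since $\nu_p(a)$ is prime to $\ell$; and at $p=\ell$ your three subcases give different exponents $\ell-2$, $\ell$, and $2\ell-1$ respectively, which reassemble into the two stated cases once one notes (as you do) that $a^{\ell-1}\equiv 1\pmod{\ell^2}$ forces $\ell\nmid a$, and that in the third subcase the extra $\ell^{\ell-1}$ is absorbed into $n^{\ell-1}$. The key identifications are also right: for $\ell\nmid a$, the condition $a^{\ell-1}\equiv 1\pmod{\ell^2}$ is equivalent to $a\in(\Z_\ell^\times)^\ell$ via the decomposition $\Z_\ell^\times\cong\mu_{\ell-1}\times(1+\ell\Z_\ell)$ and $(1+\ell\Z_\ell)^\ell=1+\ell^2\Z_\ell$, whence $K\otimes\Q_\ell\cong\Q_\ell\times\Q_\ell(\zeta_\ell)$ with tame contribution $\ell-2$; in the other two subcases your uniformizers $\sqrt[\ell]{u}-1$ and $(\sqrt[\ell]{a})^r/\ell^s$ have genuinely Eisenstein minimal polynomials (no Newton polygon is even needed in subcase two --- the constant term $1-u=-\ell v$ has valuation exactly one and the binomial coefficients $\binom{\ell}{k}$ are divisible by $\ell$), so the conductor--different formula $\mathfrak{d}=(g'(\eta))$ applies and yields the exponents $\ell$ and $2\ell-1$. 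This is essentially the argument in the references the paper cites, carried out in full.
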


The following result allows one to parametrize $\mathcal{F}_\ell$.
We leave this as an exercise, but the reader may wish to consult a reference on Kummer theory such as~\cite{MR1878556}.

\begin{lemma}\label{L:field2}
Suppose $a,b\in\Z$ are $\ell$-free.  Then
$\Q(\sqrt[\ell]{a})$ and $\Q(\sqrt[\ell]{b})$ are isomorphic if and only if
$a=b^i$ in $\Q^\times/\left(\Q^\times\right)^\ell$.
Moreover,
the function
$$\{a\in\Z_{\geq 2}\mid a\text{ is $\ell$-free}\}\to\mathcal{F}_\ell$$ given by
$a\mapsto\Q(\sqrt[\ell]{a})$
is a surjective $(\ell-1)$-to-$1$ map.
\end{lemma}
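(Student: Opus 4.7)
The plan is to establish the biconditional first, then read off surjectivity and the fiber count. For the \emph{easy direction}, if $a \equiv b^i \pmod{(\Q^\times)^\ell}$ with $\gcd(i,\ell)=1$, write $a = b^i c^\ell$ with $c \in \Q^\times$ and, using positive real $\ell$-th roots (well-defined since $\ell$ is odd), observe $\sqrt[\ell]{a} = c\,(\sqrt[\ell]{b})^i \in \Q(\sqrt[\ell]{b})$; equality of the two degree-$\ell$ fields then follows by comparing degrees. Surjectivity of the stated map is clear once one notes that any defining element $a \in \Z$ can be normalized to $\Z_{\geq 2}$ and made $\ell$-free by absorbing signs (using $\ell$ odd) and $\ell$-th power factors into the radical.

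For the \emph{reverse direction} I would pass to $F := \Q(\zeta_\ell)$ to apply Kummer theory. Setting $K = \Q(\sqrt[\ell]{a}) = \Q(\sqrt[\ell]{b})$ and $L := KF$, the degrees $[K:\Q] = \ell$ and $[F:\Q] = \ell - 1$ are coprime, so $K$ and $F$ are linearly disjoint over $\Q$ and $L = F(\sqrt[\ell]{a}) = F(\sqrt[\ell]{b})$ is cyclic of degree $\ell$ over $F$. Because $\zeta_\ell \in F$, Kummer theory identifies $L/F$ with an order-$\ell$ subgroup of $F^\times/(F^\times)^\ell$, which must equal both $\langle a \rangle$ and $\langle b \rangle$. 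Hence $a \equiv b^i \pmod{(F^\times)^\ell}$ for some $i$ coprime to $\ell$, so $a/b^i = u^\ell$ for some $u \in F^\times$.

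The main obstacle will be \emph{descending from $F$ to $\Q$}: the quantity $c := a/b^i$ is rational and an $\ell$-th power in $F$, and I need it to be an $\ell$-th power in $\Q$. The polynomial $X^\ell - c \in \Q[X]$ has the root $u$ in $F$, so the degree of $u$ over $\Q$ divides $[F:\Q] = \ell - 1 < \ell$, making $X^\ell - c$ reducible in $\Q[X]$. The classical fact that for prime $\ell$, the polynomial $X^\ell - c$ with $c \in \Q^\times$ is irreducible over $\Q$ unless $c \in (\Q^\times)^\ell$ then forces $c = v^\ell$ with $v \in \Q$, so $a \equiv b^i \pmod{(\Q^\times)^\ell}$. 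Finally, for the $(\ell-1)$-to-$1$ count, fix $K = \Q(\sqrt[\ell]{b})$ with $b \geq 2$ and $\ell$-free; the fiber over $K$ consists of the $\ell$-free positive representatives of $b, b^2, \ldots, b^{\ell-1}$ modulo $(\Q^\times)^\ell$, and these give $\ell - 1$ distinct integers because for any prime $p \mid b$ with $p$-adic valuation $e_p \in \{1,\ldots,\ell-1\}$, the map $i \mapsto i e_p \pmod \ell$ is a bijection on $\{1,\ldots,\ell-1\}$, so distinct $i$'s produce distinct exponent vectors mod $\ell$.
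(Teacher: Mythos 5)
Your proof is correct, and it follows exactly the route the paper gestures at: the paper leaves this lemma as an exercise with a pointer to Kummer theory, and your argument (pass to $\Q(\zeta_\ell)$, use the Kummer correspondence to get $a\equiv b^i$ modulo $\ell$-th powers in $\Q(\zeta_\ell)^\times$, then descend to $\Q$ via the irreducibility criterion for $X^\ell-c$) is the standard way to carry that out; the fiber count via exponent vectors modulo $\ell$ is also right. The only step worth making explicit is that ``isomorphic'' may be replaced by ``equal'': an isomorphism $\Q(\sqrt[\ell]{b})\to\Q(\sqrt[\ell]{a})\subset\mathbb{R}$ must send $\sqrt[\ell]{b}$ to a real root of $X^\ell-b$, of which there is exactly one since $\ell$ is odd, so isomorphic pure fields of degree $\ell$ coincide as subfields of $\mathbb{R}$.
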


The following formula for the genus number appears in~\cite{MR584554, MR625129}.  The reader may also
want to consult~\cite{MR435028}.

\begin{lemma}\label{L:Ishida}
Let $K=\Q(\sqrt[\ell]{m})\in\mathcal{F}_\ell$.  Then
$$
  g_K=\prod_{\substack{p\mid m\\p\neq \ell}} (\ell,p-1)=\ell^{\widehat{\omega}(m)}
  \,.
$$
We denote by $\widehat{\omega}(n)$ the number of $p\mid n$
such that $p\equiv 1\pmod{\ell}$.
\end{lemma}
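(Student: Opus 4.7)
The plan is to apply Fr\"ohlich's definition directly: $K^* = Kk^*$ where $k^*/\mathbb{Q}$ is the maximal abelian extension with $Kk^*/K$ unramified at every finite prime. Since $[K:\mathbb{Q}] = \ell$ is prime and $K$ is not contained in any abelian extension of $\mathbb{Q}$ (as $m$ is not an $\ell$-th power, so $K/\mathbb{Q}$ is not Galois), the abelian subfield $K\cap k^*$ of $K$ is just $\mathbb{Q}$. Hence $g_K = [K^*:K] = [k^*:\mathbb{Q}]$, and the task reduces to identifying $k^*$. By Kronecker-Weber I decompose $k^* = \prod_p k_p^*$ with each $k_p^*/\mathbb{Q}$ abelian of $p$-power conductor (so the $k_p^*$ are linearly disjoint), reducing the problem to a prime-by-prime analysis: for each rational $p$, find the maximal $k_p^*/\mathbb{Q}$ of $p$-power conductor such that $Kk_p^*/K$ is unramified above $p$. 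Any nontrivial $k_p^* \subseteq \mathbb{Q}(\zeta_{p^n})$ is totally ramified at $p$, so its local degree matches its global degree.

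For each rational prime $p$, fix a prime $\mathfrak{p}$ of $K$ above $p$. When $p\nmid m$ and $p\neq\ell$, Lemma~\ref{L:field1} gives that $K$ is unramified at $p$, so $k_p^* = \mathbb{Q}$. When $p\mid m$ and $p\neq\ell$, the local extension $K_\mathfrak{p}/\mathbb{Q}_p$ is tamely and totally ramified of degree $\ell$. Writing the ramification index of $(k_p^*)_p/\mathbb{Q}_p$ as $p^a d$ with $d\mid p-1$, the ramification index of the compositum is $p^a\cdot\mathrm{lcm}(\ell, d)$ (the wild parts of $K_\mathfrak{p}$ are trivial and the tame inertias are cyclic). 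Requiring this to equal $\ell$ forces $a = 0$ and $d\mid\ell$. If $p\not\equiv 1\pmod\ell$ then $d\mid\gcd(\ell, p-1) = 1$, giving $k_p^* = \mathbb{Q}$. If $p\equiv 1\pmod\ell$ the maximal choice $d = \ell$ yields a tamely totally ramified abelian extension of $\mathbb{Q}_p$ of degree $\ell$, which globally corresponds to the unique cyclic degree-$\ell$ subfield $L_p\subset\mathbb{Q}(\zeta_p)$, contributing a factor $\ell$.

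For $p = \ell$ I show $k_\ell^* = \mathbb{Q}$ by splitting according to Lemma~\ref{L:field1}. In the split subcase ($\ell\nmid m$ and $m^{\ell-1}\equiv 1\pmod{\ell^2}$), the rational prime $\ell$ has an unramified prime $\mathfrak{p}_1$ in $K$ with $K_{\mathfrak{p}_1} = \mathbb{Q}_\ell$, which immediately forces $(k_\ell^*)_\ell = \mathbb{Q}_\ell$. In the remaining (wildly ramified) subcases, $K_\mathfrak{p}/\mathbb{Q}_\ell$ is totally wildly ramified of degree $\ell$ and non-Galois (since $[\mathbb{Q}_\ell(\zeta_\ell):\mathbb{Q}_\ell] = \ell-1$ does not divide $\ell$, so $\zeta_\ell\notin K_\mathfrak{p}$), hence non-abelian; the abelian subfield $K_\mathfrak{p}\cap(k_\ell^*)_\ell$ of the prime-degree non-abelian $K_\mathfrak{p}$ is thus $\mathbb{Q}_\ell$, giving $[K_\mathfrak{p}(k_\ell^*)_\ell:\mathbb{Q}_\ell] = \ell\cdot[(k_\ell^*)_\ell:\mathbb{Q}_\ell]$. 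A direct local analysis (working with the Kummer-type extension $(k_\ell^*)_\ell(\sqrt[\ell]{m})/(k_\ell^*)_\ell$ and verifying that the unit $m/\pi^{v(m)}$ of $(k_\ell^*)_\ell$ is not close enough to an $\ell$-th power for the top step to become unramified) shows the compositum is totally ramified over $\mathbb{Q}_\ell$ of ramification index $\ell\cdot[(k_\ell^*)_\ell:\mathbb{Q}_\ell]$; equating with $\ell$ forces $(k_\ell^*)_\ell = \mathbb{Q}_\ell$.

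Assembling, $k^* = \prod_{p\mid m,\,p\equiv 1\pmod\ell} L_p$ has degree $\ell^{\widehat{\omega}(m)}$ over $\mathbb{Q}$. The alternative product $\prod_{p\mid m,\,p\neq\ell}(\ell, p-1)$ equals this since $(\ell, p-1) = \ell$ exactly when $p\equiv 1\pmod\ell$, and the restriction $p\neq\ell$ is automatic as $\ell\not\equiv 1\pmod\ell$. The main obstacle is the analysis at $p = \ell$: wild ramification demands a separate treatment, and the crucial ingredient is the non-abelianness of $K_\mathfrak{p}/\mathbb{Q}_\ell$, which ensures that any nontrivial abelian totally-ramified contribution at $\ell$ enlarges the ramification index of the compositum, obstructing the unramifiedness condition.
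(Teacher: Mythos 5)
The paper does not prove this lemma at all --- it is quoted from Ishida's work on genus fields of pure fields (the cited references \cite{MR584554, MR625129, MR435028}) --- so your proposal is an attempt to supply a proof where the paper offers only a citation. Your overall strategy (Fr\"ohlich's definition, $g_K=[k^*:\Q]$ via $K\cap k^*=\Q$, then a prime-by-prime local analysis) is exactly the standard route, and the tame analysis at $p\mid m$, $p\neq\ell$ and the split subcase at $p=\ell$ are correct. However, two steps are genuine gaps. First, the reduction ``by Kronecker--Weber I decompose $k^*=\prod_p k_p^*$ with each $k_p^*$ of $p$-power conductor'' is not a consequence of Kronecker--Weber and is false for general abelian fields: $\Q(\sqrt{21})$ has trivial intersection with every $\Q(\zeta_{3^a})$ and every $\Q(\zeta_{7^a})$, so an abelian field ramified at several primes need not be a compositum of prime-power-conductor pieces. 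For the genus field the decomposition does hold, but it must be earned: one first uses the unramifiedness of $Kk^*/K$ to show that every inertia subgroup of $\mathrm{Gal}(k^*/\Q)$ has order dividing $\ell$ and that $k^*/\Q$ is tame away from $\ell$, whence (since $\mathrm{Gal}(k^*/\Q)$ is generated by its inertia subgroups) it is elementary abelian of exponent $\ell$ with squarefree conductor away from $\ell$, and only then does $k^*\subseteq\prod_{p\equiv 1\,(\ell),\,p\mid m}L_p$ follow from the structure of $(\Z/\prod p_i)^\times$. This reduction is precisely where the main subtlety of genus theory over $\Q$ lives, and asserting it up front begs the question.

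Second, in the wildly ramified subcase at $p=\ell$ you reduce everything to the claim that for a nontrivial totally ramified abelian $M=(k_\ell^*)_\ell/\Q_\ell$ the compositum $K_{\mathfrak{p}}M$ is totally ramified, and you support this only with a parenthetical about ``verifying that the unit $m/\pi^{v(m)}$ is not close enough to an $\ell$-th power.'' That computation is the crux of the hardest case and is not carried out; whether $M(\sqrt[\ell]{u})/M$ is unramified depends on how close $u$ is to an $\ell$-th power relative to the (enlarged) ramification of $M$, and it is not evident that the naive estimate closes. A clean way to finish is structural rather than computational: if $K_{\mathfrak{p}}M/K_{\mathfrak{p}}$ were unramified, then comparing ramification indices in the tower forces $K_{\mathfrak{p}}M/M$ to be unramified; every unramified extension of $M$ has the form $MW$ with $W/\Q_\ell$ unramified, so $K_{\mathfrak{p}}M=MW$ would be abelian over $\Q_\ell$, contradicting the fact (which you already established) that it contains the non-Galois extension $K_{\mathfrak{p}}/\Q_\ell$. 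With that replacement, and with the first gap filled as indicated, your argument becomes a complete proof.
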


The following result is standard (see, for example, Chapter 4 of~\cite{MR2378655}).

\begin{lemma}[Mertens' Second Theorem in Residue Classes]\label{L:Mertens}
  Suppose $S\subseteq(\Z/m\Z)^\times$.  As $X\to\infty$, we have that
  $$
    \sum_{\substack{p\leq X\\p\in S\pmod{m}}}\frac{1}{p}=\frac{|S|}{\phi(m)}\log\log X + M + O\left(\frac{1}{\log X}\right).
  $$
  Note that both the constant $M$ and the implicit constant in the error term depend on the set $S$
  (and hence the modulus $m$).
\end{lemma}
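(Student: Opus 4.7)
The plan is to reduce to a single residue class by linearity: if the asymptotic holds with $|S|=1$ for each $a\in(\Z/m\Z)^\times$, summing over $a\in S$ yields the general statement with $M=\sum_{a\in S}M_a$. So it suffices to prove
$$
  \sum_{p\leq X,\,p\equiv a\pmod m}\frac{1}{p}=\frac{\log\log X}{\phi(m)}+M_a+O\!\left(\frac{1}{\log X}\right).
$$

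My first step is to establish the weighted (\emph{first} Mertens theorem) analogue in a single residue class:
$$
  \sum_{p\leq X,\,p\equiv a\pmod m}\frac{\log p}{p}=\frac{\log X}{\phi(m)}+C_a+O\!\left(\frac{1}{\log X}\right).
$$
To see this, write the indicator of $p\equiv a\pmod m$ via Dirichlet character orthogonality as $\frac{1}{\phi(m)}\sum_{\chi\bmod m}\bar\chi(a)\chi(p)$. The principal character $\chi_0$ contributes $\frac{1}{\phi(m)}\sum_{p\leq X,\,p\nmid m}\log p/p$, which by the classical Mertens theorem equals $\frac{\log X}{\phi(m)}+\text{const}+O(1/\log X)$ (discarding finitely many primes dividing $m$ costs only $O(1)$). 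For each non-principal $\chi$, one needs to show that $\sum_p\chi(p)\log p/p$ converges with an $O(1/\log X)$ tail; this follows from analyzing $-L'(s,\chi)/L(s,\chi)$ near $s=1$ together with Dirichlet's non-vanishing $L(1,\chi)\neq 0$ and partial summation.

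Given the weighted estimate, Abel summation finishes the job. Letting $F(t)=\sum_{p\leq t,\,p\equiv a\pmod m}\log p/p$, one has
$$
  \sum_{p\leq X,\,p\equiv a\pmod m}\frac{1}{p}=\frac{F(X)}{\log X}+\int_2^X\frac{F(t)}{t(\log t)^2}\,dt.
$$
Substituting the asymptotic $F(t)=\log t/\phi(m)+C_a+O(1/\log t)$ and evaluating the integral (using $\int_2^X dt/(t\log t)=\log\log X+O(1)$ and the convergence of $\int_2^\infty dt/(t(\log t)^2)$) produces $\frac{\log\log X}{\phi(m)}+M_a+O(1/\log X)$, as required. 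The character-dependent quantities get absorbed into $M_a$ and the implicit constant, which is why both depend on $S$ (equivalently, on $m$).

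The main obstacle is the non-principal character piece of the weighted estimate: proving that $\sum_p\chi(p)\log p/p$ converges at the rate $O(1/\log X)$ requires more than just $L(1,\chi)\neq 0$; one needs enough quantitative control on $L(s,\chi)$ near $s=1$ to extract the explicit error term. Every other step—orthogonality, the classical Mertens input, and the partial summation—is mechanical once this analytic input is in place, which is exactly why the result is routinely quoted from standard textbook treatments of analytic number theory.
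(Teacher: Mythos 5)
The paper does not prove this lemma at all --- it is quoted as a standard fact with a pointer to Chapter 4 of Montgomery--Vaughan --- so there is no in-paper argument to compare against. Your sketch is the standard textbook proof of exactly that cited result: orthogonality of characters to isolate a residue class, the first Mertens theorem in progressions, then Abel summation, and it is correct in outline. The one point you rightly flag as the crux --- getting $\sum_{p\leq X}\chi(p)\log p/p = c_\chi + O(1/\log X)$ for non-principal $\chi$ rather than mere convergence --- is not a gap in substance: since the modulus $m$ is fixed (the paper only ever uses $m=\ell^2$), this follows from the prime number theorem for arithmetic progressions with the classical de la Vall\'ee Poussin error term $O\bigl(X\exp(-c\sqrt{\log X})\bigr)$ and partial summation, which is far stronger than needed; bare non-vanishing of $L(1,\chi)$ alone would indeed not suffice for the stated error term. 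With that input named explicitly, your argument is complete and is the same route the cited reference takes.
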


The following result is well-known (see~\cite{MR131389}).

\begin{lemma}[Wirsing's Theorem]\label{L:Wirsing}
    Let $f(n)\geq 0$ be a multiplicative function.  Assume that 
    there is a constant $\tau>0$ such that
    \begin{equation} \label{eq: wirsing condition}
        \sum_{p \leq X} f(p) \sim \tau\cdot\frac{X}{\log X}
    \end{equation}
    and constants $\gamma_1 > 0$ and $0 \leq \gamma_2 < 2$
    such that $f(p^k) \leq \gamma_1 \gamma_2^k$ for all $k\geq 2$.
   Then we have
    \[\sum_{n \leq X} f(n) \sim \frac{X}{\log(X)} \cdot \frac{e^{-\gamma \tau}}{\Gamma(\tau)} \prod_{p \leq X} \left(1 + \frac{f(p)}{p} + \frac{f(p^2)}{p^2} + \cdots \right)\,.\]
\end{lemma}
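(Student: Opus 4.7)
The plan is to reduce the asymptotic for $S(X) := \sum_{n \leq X} f(n)$ to an integral equation of convolution type and then identify the constant by comparing with the Euler product and Mertens' theorem; this follows Wirsing's original 1961 real-variable strategy, with a classical Tauberian step supplying the $\Gamma(\tau)$ factor.

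First I would establish the Landau-type convolution identity
$$\sum_{n \leq X} f(n) \log n = \sum_{p^k \leq X} f(p^k)\log(p^k) \sum_{\substack{m \leq X/p^k \\ (m,p)=1}} f(m),$$
obtained from $\log n = \sum_{d \mid n}\Lambda(d)$ and the multiplicativity of $f$. The bound $f(p^k) \leq \gamma_1 \gamma_2^k$ with $\gamma_2 < 2$ ensures the $k \geq 2$ contribution is $O(X)$, since $\sum_p \sum_{k \geq 2}(\gamma_2/p)^k \log(p^k)$ converges. Combining with the elementary identity $S(X)\log X = \sum_{n \leq X}f(n)\log n + \int_1^X S(t)\,dt/t$ (from $\log(X/n) = \int_n^X dt/t$), and using Abel summation against the hypothesis $\sum_{p\leq Y}f(p)\log p \sim \tau Y$ (which itself follows from \eqref{eq: wirsing condition} by partial summation), one derives the asymptotic integral equation
$$S(X)\log X \sim \tau X\int_1^X \frac{S(v)}{v^2}\,dv + \int_1^X \frac{S(t)}{t}\,dt,$$
whose only admissible self-similar solutions are $S(X) \sim cX(\log X)^{\tau-1}$ for some $c > 0$.

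To identify $c$, I would pass to the Dirichlet series $F(s) = \sum_n f(n)/n^s = \prod_p E_p(1/p^s)$, with $E_p(x) = 1 + f(p)x + f(p^2)x^2 + \cdots$. The prime-sum hypothesis translates into $F(s) \sim C_f (s-1)^{-\tau}$ as $s \to 1^+$, where
$$C_f := \prod_p (1-1/p)^\tau E_p(1/p);$$
convergence of $C_f$ follows from the estimate $(1-1/p)^\tau E_p(1/p) = 1 + (f(p)-\tau)/p + O(1/p^2)$ together with partial summation on $\sum_p (f(p)-\tau)/p$, which converges by \eqref{eq: wirsing condition}. A Karamata/Landau-Selberg Tauberian argument, available because $f \geq 0$ makes $S$ monotone, then pins down $c = C_f/\Gamma(\tau)$, giving
$$S(X) \sim \frac{C_f}{\Gamma(\tau)}\,X(\log X)^{\tau-1}.$$

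Finally I would reshape the answer into the form in the statement using Mertens' second theorem $\prod_{p\leq X}(1-1/p)^{-\tau} \sim e^{\gamma\tau}(\log X)^\tau$ and the absolute convergence $\prod_{p > X}(1-1/p)^\tau E_p(1/p) \to 1$. Together these yield
$$C_f(\log X)^{\tau-1} = \frac{e^{-\gamma\tau}}{\log X}\prod_{p \leq X}\!\left(1 + \frac{f(p)}{p} + \frac{f(p^2)}{p^2} + \cdots\right)(1 + o(1)),$$
which inserted into the Tauberian conclusion is exactly the statement. The main obstacle is the passage from the singularity $F(s) \sim C_f(s-1)^{-\tau}$ to the sharp asymptotic with constant $C_f/\Gamma(\tau)$: this is the Tauberian heart of the proof, and the precise identification of the $\Gamma(\tau)$ factor can be verified either by following Wirsing's delicate elementary induction or by comparison with the benchmark $f(n) = \tau^{\omega(n)}$, whose asymptotic reduces to Landau's classical formula for the $k$-fold divisor function.
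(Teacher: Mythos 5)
The paper does not prove this lemma---it is imported from Wirsing's 1961 paper \cite{MR131389} and used as a black box---so the only question is whether your sketch actually establishes the statement. It does not, because of one genuine gap: you assume that the normalized Euler product $C_f=\prod_p(1-1/p)^\tau E_p(1/p)$ converges, deducing this from ``partial summation on $\sum_p(f(p)-\tau)/p$, which converges by \eqref{eq: wirsing condition}.'' That deduction is false. The hypothesis $\sum_{p\le X}f(p)\sim\tau X/\log X$ only gives $\sum_{p\le X}f(p)/p=\tau\log\log X+o(\log\log X)$; it does not force $\sum_p(f(p)-\tau)/p$ to converge. For example, $f(p)=\tau+1/\log\log p$ (for $p$ large, extended multiplicatively with $f(p^k)=0$ for $k\ge 2$) satisfies \eqref{eq: wirsing condition}, yet $\sum_p(f(p)-\tau)/p$ diverges like $\log\log\log X$, so $C_f$ is not defined, $F(s)$ is not asymptotic to $C(s-1)^{-\tau}$ for any constant, and $S(X)$ is not asymptotic to $cX(\log X)^{\tau-1}$ for any constant $c$. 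This is precisely why Wirsing's conclusion is written with the \emph{partial} product $\prod_{p\le X}\bigl(1+f(p)/p+\cdots\bigr)$ on the right-hand side rather than a constant times $(\log X)^{\tau}$: the theorem covers slowly varying deviations that your reduction to a single constant cannot see. The same defect infects your earlier claim that the integral equation's ``only admissible self-similar solutions'' are $cX(\log X)^{\tau-1}$. A secondary (repairable but nontrivial) issue is the Tauberian step: Karamata's theorem applied to $F(s)\sim C(s-1)^{-\tau}$ with $f\ge 0$ yields the logarithmically weighted asymptotic $\sum_{n\le X}f(n)/n\sim C(\log X)^\tau/\Gamma(\tau+1)$, and passing from this to $\sum_{n\le X}f(n)\sim CX(\log X)^{\tau-1}/\Gamma(\tau)$ requires an extra de-smoothing argument (this is where the convolution identity is really needed), not just monotonicity of $S$.

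For what it is worth, in the applications made in this paper the functions $f$ take finitely many constant values on primes in fixed residue classes modulo $\ell$ or $\ell^2$, so $\sum_p(f(p)-\tau)/p$ does converge by Lemma~\ref{L:Mertens}, the product $C_f$ exists, and a Selberg--Delange or Tauberian argument along your lines would suffice to justify every invocation of the lemma in Sections~\ref{S:counting}--\ref{S:average} (this is essentially the manipulation carried out in the Remark following the lemma). But it does not prove the lemma in the generality stated; for that one needs Wirsing's elementary argument, which works directly with the partial products and never requires the infinite product to converge.
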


\begin{remark}
Our applications of Wirsing's Theorem will always be to a sum of the form $\sum_{n\leq X}\mu^2(n)f(n)$.
Note that this simplifies both the hypothesis and the conclusion of Lemma~\ref{L:Wirsing}.
In addition, it will be helpful to note that for $\tau>0$,
one has
$$
\prod_{p\leq X}\left(1+\frac{\tau}{p}\right)\sim e^{\tau\gamma}(\log X)^\tau\prod_p\left(1+\frac{\tau}{p}\right)\left(1-\frac{1}{p}\right)^\tau
\,.
$$
In the sequel, we will require the use of this type of manipulation, which follows from Mertens' Third Theorem.
\end{remark}

\section{Counting pure fields of prime degree $\ell$}\label{S:counting}

Suppose $\ell\geq 3$ is prime.
We begin with the equation
\begin{align}
  \label{E:1}
   (\ell-1)N_\ell(X)+1
   &= 
  \sum_{n\leq a(\ell)X^{\frac{1}{\ell-1}}}
    \hspace{-2ex}
  \mu^2(n)
  \hspace{-4ex}
  \sum_{\substack{
  \rad(a)=n\\
  \text{$a$ is $\ell$-free}\\
  a^{\ell-1}\equiv 1\pmod{\ell^2}
  }}
    \hspace{-2ex}
  1
  +
  \sum_{n\leq b(\ell)X^{\frac{1}{\ell-1}}}
  \hspace{-2ex}
  \mu^2(n)
   \hspace{-4ex}
  \sum_{\substack{
  \rad(a)=n\\
  \text{$a$ is $\ell$-free}\\
  a^{\ell-1}\not\equiv 1\pmod{\ell^2}
  }}
  1,
\end{align}
where $a(\ell)=\ell^{-1+\frac{1}{\ell-1}}$ and
$b(\ell)=\ell^{-1-\frac{1}{\ell-1}}$,
which follows from
Lemmas~\ref{L:field1} and~\ref{L:field2}.

Using the orthogonality of Dirichlet characters,
the first summand on the right-hand side of (\ref{E:1}) becomes
\begin{align}
  \nonumber
  &
  \frac{1}{\phi(\ell^2)}
  \sum_{\chi\text{ mod $\ell^2$}}
  \sum_{n\leq a(\ell)X^{\frac{1}{\ell-1}}}
  \mu^2(n)
  \sum_{\substack{
  \rad(a)=n\\
  \text{$a$ is $\ell$-free}
  }}
    \chi^{\ell-1}(a)
    \\
    \nonumber
    &=
    \frac{1}{\ell}
  \sum_{\substack{\chi\text{ mod $\ell^2$}\\\chi^\ell=\chi_0}}
  \sum_{n\leq a(\ell)X^{\frac{1}{\ell-1}}}
  \mu^2(n)
  \sum_{\substack{
  \rad(a)=n\\
  \text{$a$ is $\ell$-free}
  }}
    \chi(a)
    \,.
\end{align}
This motivates us to define the following multiplicative function
and the associated summatory function (over squarefree integers):
$$
  f_\chi(n)=
  \sum_{\substack{
  \rad(a)=n\\
  \text{$a$ is $\ell$-free}
  }}
    \chi(a)
    \,,\qquad
  F_\chi(Y) = \sum_{n\leq Y}\mu^2(n)f_\chi(n)
  \,.
$$
Without the character, we similarly have
$$
  f(n)=
  \sum_{\substack{
  \rad(a)=n\\
  \text{$a$ is $\ell$-free}
  }}
  1
  =
  (\ell-1)^{\omega(n)}
  \,,\qquad
  F(Y)=\sum_{n\leq Y}\mu^2(n)f(n)
  \,.
$$
This all leads to the following expression:
\begin{equation}\label{E:BIG}
(\ell-1)N_\ell(X)+1
=
\frac{1}{\ell}\sum_{\substack{\chi\text{ mod $\ell^2$}\\\chi^\ell=\chi_0}}
  \left[
  F_\chi(a(\ell)X^{\frac{1}{\ell-1}})
  -
    F_\chi\left(
   b(\ell)X^{\frac{1}{\ell-1}}
  \right)
\right]
  +
  F\left(
  b(\ell)X^{\frac{1}{\ell-1}}
  \right)
  \,.
\end{equation}
To give an asymptotic for $N_\ell(X)$,
it suffices to give an asymptotic for $F_\chi(Y)$ when $\chi=\chi_0$,
including the case of $F(Y)$,
and to bound $F_\chi(Y)$ when $\chi\neq \chi_0$.

\vspace{1ex}
\underline{Case $\chi=\chi_0$}:
We clearly have
$\sum_{p\leq Y}(\ell-1)\sim (\ell-1)Y/\log Y$
by the Prime Number Theorem and therefore
Lemma~\ref{L:Wirsing} implies
\begin{equation}\label{E:nochi}
  F(Y)\sim D_\ell Y(\log Y)^{\ell-2}
  \,,\quad
  D_\ell
  =
  \frac{1}{(\ell-2)!}\prod_{p}\left(1+\frac{\ell-1}{p}\right)\left(1-\frac{1}{p}\right)^{\ell-1}
  \,.
\end{equation}
Similarly, we have
\begin{equation}\label{E:chi0}
F_{\chi_0}(Y)\sim c(\ell)D_\ell Y(\log Y)^{\ell-2}
\,,\quad
c(\ell)=\left(1+\frac{\ell-1}{\ell}\right)^{-1}=\frac{\ell}{2\ell-1}
\,.
\end{equation}
\vspace{1ex}

\underline{Case $\chi\neq \chi_0$}:
We use the bound
$$|F_\chi(Y)|\leq\sum_{n\leq Y}\mu^2(n)|f_\chi(n)|$$ 
and apply Lemma~\ref{L:Wirsing} to the latter sum to obtain
\begin{align*}
F_\chi(Y)
\ll
\frac{Y}{\log Y}
\prod_{\substack{p\leq Y\\p^{\ell-1}\not\equiv 1\pmod{\ell^2}}}\left(1+\frac{1}{p}\right)
\prod_{\substack{p\leq Y\\p^{\ell-1}\equiv 1\pmod{\ell^2}}}\left(1+\frac{\ell-1}{p}\right)
\,.
\end{align*}
Taking logarithms of the two products above, using the Taylor expansion at $x = 0$ for $\log(1+x)$, and applying Lemma~\ref{L:Mertens} gives
\begin{align*}
&
\sum_{\substack{p\leq Y\\p^{\ell-1}\not\equiv 1\pmod{\ell^2}}}
\frac{1}{p}
+
\sum_{\substack{p\leq Y\\p^{\ell-1}\equiv 1\pmod{\ell^2}}}
\frac{\ell-1}{p}
+
\sum_{p\leq Y}O\left(\frac{1}{p^2}\right)
\\[1ex]
&\qquad=
\left(\frac{(\ell-1)^2}{\phi(\ell^2)}\cdot 1\right)\log\log Y+
\left(\frac{(\ell-1)}{\phi(\ell^2)}\cdot(\ell-1)\right)\log\log Y +O(1)
\\[1ex]
&\qquad=
2\left(1-\frac{1}{\ell}\right)\log\log Y + O(1)
\,.
\end{align*}
Therefore
\begin{equation}\label{E:haschi}
  F_\chi(Y)\ll Y(\log Y)^{1-\frac{2}{\ell}}
  \,.
\end{equation}
Putting (\ref{E:nochi}), (\ref{E:chi0}), (\ref{E:haschi}) into (\ref{E:BIG}),
and using
\begin{equation}\label{L:logpower}
\log\left(a(\ell)X^{\frac{1}{\ell-1}}\right)^{\ell-2}\sim
\frac{1}{(\ell-1)^{\ell-2}}(\log X)^{\ell-2}
\,,
\end{equation}
we establish (\ref{E:Benli}) with
\begin{align}
  \label{E:const}
  C_\ell&=\frac{1}{(\ell-1)^{\ell-1}}\left(\frac{a(\ell)-b(\ell)}{\ell}c(\ell)+b(\ell)\right)D_\ell
  \\
  \label{E:const2}
  &=
  \frac{\ell^{-\frac{1}{\ell-1}}}{\ell!(\ell-1)^{\ell-2}}\left(1+\frac{\ell^{\frac{2}{\ell-1}}-1}{2\ell-1}\right)
    \prod_{p}\left(1+\frac{\ell-1}{p}\right)\left(1-\frac{1}{p}\right)^{\ell-1}
  \,.
\end{align}
We note that our formula for the constant (\ref{E:const2}) differs slightly from the one
in~\cite{MR3814003}, but we believe it to be correct and it matches the formula in~\cite{MR2745550}
when specialized to $\ell=3$.

\section{Pure fields with genus number one}\label{S:one}

\begin{proof}[Proof of Theorem~\ref{T:one}]
As before we write $K=\Q(\sqrt[\ell]{a})\in\mathcal{F}_\ell$ with $\rad(a)=n$.
Lemma~\ref{L:Ishida} tells us that $g_K=\ell^{\widehat{\omega}(n)}$.
In order to only count fields $K$ with genus number one,
we impose the restriction
that $p\not\equiv 1\pmod{\ell}$ for all primes $p\mid n$.
We impose this restriction on the sums over $n$ in (\ref{E:1}) and
follow the proof in Section~\ref{S:counting}, except that we
multiply $f_\chi(n)$ and $f(n)$ by the multiplicative function $g(n)$
that outputs $1$ if the restriction is satisfied and $0$ otherwise.
In this way the right-hand side of (\ref{E:BIG}) still counts the relevant quantity.
That is, everything stays the same until we split into cases.

\underline{Case $\chi=\chi_0$}: Using
the Prime Number Theorem in Arithmetic Progressions, we find
$$\sum_{\substack{p\leq Y\\p\not\equiv 1\pmod{\ell}}}(\ell-1)\sim (\ell-2)Y/\log Y\,,$$
and Lemma~\ref{L:Wirsing} implies
$$
  F(Y)\sim \widehat{D}_\ell Y(\log Y)^{\ell-3}
  \,,\qquad
  \widehat{D}_\ell
  =
  \frac{1}{(\ell-3)!}
  \prod_{p}
  \widehat{D}_{\ell,p},
$$
where
$$
  \widehat{D}_{\ell,p}=
  \left(1-\frac{1}{p}\right)^{\ell-2}
  \begin{cases}
  1, & p\equiv 1\pmod{\ell}\,,\\
  1+\frac{\ell-1}{p}, & p\not\equiv 1\pmod{\ell}\,.
  \end{cases}
$$
Similarly, we have
$$
  F_{\chi}(Y)\sim c(\ell)\widehat{D}_\ell Y(\log Y)^{\ell-3}
  \,.
$$

\underline{Case $\chi\neq\chi_0$}:
We again
use $|F_\chi(Y)|\leq\sum_{n\leq Y}\mu^2(n)|f_\chi(n)|$
and apply Lemma~\ref{L:Wirsing}. 
As in Section~\ref{S:counting},
this leads to a bound of the form
$$
  F_\chi(Y)\ll \frac{Y}{\log Y}(\log Y)^\alpha
  \,.
$$
The following table gives the values of $f_\chi(p)$ for various congruence
classes modulo $\ell^2$.
\begin{center}
\begin{tabular}{l|l|l}
$f_\chi(p)$ & $\# n\in\left(\Z/\ell^2\Z\right)^\times$ & congruences\\
\hline
$0$ & $1$ & $p\equiv 1\pmod{\ell}$,\; $p^{\ell-1}\equiv 1\pmod{\ell^2}$\\
$0$ & $\ell-1$&$p\equiv 1\pmod{\ell}$,\; $p^{\ell-1}\not\equiv 1\pmod{\ell^2}$\\
$\ell-1$ & $\ell-2$ & $p\not\equiv 1\pmod{\ell}$,\; $p^{\ell-1}\equiv 1\pmod{\ell^2}$\\
$-1$ & $(\ell-1)(\ell-2)$ & $p\not\equiv 1\pmod{\ell}$,\; $p^{\ell-1}\not\equiv 1\pmod{\ell^2}$
\end{tabular}
\end{center}
From the table, one reads off the value
$$
  \alpha=\frac{(\ell-1)(\ell-2)+(\ell-1)(\ell-2)}{\phi(\ell^2)}=
  2\left(1-\frac{2}{\ell}\right)
  \,,
$$
and therefore
$$
F_\chi(Y)\ll Y(\log Y)^{1-\frac{4}{\ell}}
\,.
$$

At this point, for clarity, we denote the new counting function by $\widehat{N}_\ell(X)$.
We have thus obtained $\widehat{N}_\ell(Y)\sim \widehat{C}_\ell Y(\log Y)^{\ell-3}$, where
one can write down the constant $\widehat{C}_\ell$ exactly as in (\ref{E:const}).
(The expression is the same but with $D_\ell$ replaced by $\widehat{D}_\ell$.)
This proves the theorem with a constant of proportionality
$\widehat{C}_\ell/C_\ell=(\ell-1)\widehat{D}_\ell/D_\ell=:1/A_\ell$.
The extra factor of $\ell-1$ arises from (\ref{L:logpower}).
\end{proof}


\section{The average genus number}\label{S:average}

\begin{proof}[Proof of Theorem~\ref{T:average}]
The proof is similar to the one in the previous section.  In order to sum $g_K$ over $K\in\mathcal{F}_\ell$
with $|\Delta_K|\leq X$, we insert the multiplicative function $g(n)=\ell^{\widehat{\omega}(n)}$
immediately to the right of both sums over $n$ in (\ref{E:1}).  As before, the right-hand side of
(\ref{E:BIG}) still counts the relevant quantity, provided we multiply $f_\chi(n)$ and $f(n)$ by the function $g(n)$.
To be clear, the modified $f_\chi(n)$ is defined as
$$
  f_\chi(n)=\ell^{\widehat{\omega}(n)}
  \sum_{\substack{
  \rad(a)=n\\
  \text{$a$ is $\ell$-free}
  }}
    \chi(a)
    \,,
$$
and the left-hand side of (\ref{E:BIG})
becomes $(\ell-1)\widehat{N}_\ell(X)+1$,
where
$\widehat{N}_\ell(X)=\sum_{|\Delta_K|\leq X}g_K$.

\underline{Case $\chi=\chi_0$}:
Using the Prime Number Theorem in Arithmetic Progressions, we observe that
$$
  \sum_{p\leq Y}g(p)(\ell-1)
  =
  \hspace{-2ex}
  \sum_{\substack{p\leq Y\\p\equiv1\pmod{\ell}}}
    \hspace{-2ex}
  \ell(\ell-1)
  +
    \hspace{-2ex}
    \sum_{\substack{p\leq Y\\p\not\equiv 1\pmod{\ell}}}
      \hspace{-2ex}
    (\ell-1)
    \sim(2\ell-2)\frac{Y}{\log Y}
    \,.
$$
Therefore Lemma~\ref{L:Wirsing} gives
$$
  F(Y)\sim \widehat{D}_\ell Y(\log Y)^{2\ell-3}
  \,,\qquad
  \widehat{D}_\ell=
  \frac{1}{(2\ell-3)!}
  \prod_p
  \widehat{D}_{\ell,p},
 $$
 where 
$$
  \widehat{D}_{\ell,p}=
  \left(1-\frac{1}{p}\right)^{2\ell-2}
  \begin{cases}
  1+\frac{\ell(\ell-1)}{p},
  & p\equiv 1\pmod{\ell},\\
  1+\frac{\ell-1}{p},
  & p\not\equiv 1\pmod{\ell}.
  \end{cases}
$$
Similarly,
$$F_\chi(Y)\sim c(\ell)\widehat{D}_\ell Y(\log Y)^{2\ell-3}
\,.
$$

\underline{Case $\chi\neq \chi_0$}:
The bound works the same as in Section~\ref{S:counting},
but there are four cases to consider.
We have
$$
  f_\chi(n)=\ell^{\widehat{\omega}(n)}\begin{cases}
   \ell-1, & n^{\ell-1}\equiv 1\pmod{\ell^2}\,,\\
  -1, & n^{\ell-1}\not\equiv 1\pmod{\ell^2}\,.
  \end{cases}
$$
The cases (when $p\neq\ell$) are summarized in the following table:

\vspace{1ex}

\begin{center}

\begin{tabular}{l|l|l}
$f_\chi(p)$ & $\# n\in\left(\Z/\ell^2\Z\right)^\times$ & congruences\\
\hline
$\ell(\ell-1)$ & $1$ & $p\equiv 1\pmod{\ell}$,\; $p^{\ell-1}\equiv 1\pmod{\ell^2}$\\
$-\ell$ & $\ell-1$&$p\equiv 1\pmod{\ell}$,\; $p^{\ell-1}\not\equiv 1\pmod{\ell^2}$\\
$\ell-1$ & $\ell-2$ & $p\not\equiv 1\pmod{\ell}$,\; $p^{\ell-1}\equiv 1\pmod{\ell^2}$\\
$-1$ & $(\ell-1)(\ell-2)$ & $p\not\equiv 1\pmod{\ell}$,\; $p^{\ell-1}\not\equiv 1\pmod{\ell^2}$
\end{tabular}

\end{center}

\vspace{1ex}

Thus we have the bound
\begin{align*}
F_\chi(Y)
\ll
Y
(\log Y)^{\alpha-1}
\end{align*}
with
$$
  \alpha=\frac{\ell(\ell-1)\cdot 1+\ell\cdot(\ell-1)+(\ell-1)\cdot(\ell-2)+1\cdot(\ell-1)(\ell-2)}{\phi(\ell^2)}=4\left(1-\frac{1}{\ell}\right)
  \,.
$$
This completes the bound on the error term, which suffices since $3-4/\ell<2\ell-3$.

We have thus obtained $\widehat{N}_\ell(Y)\sim \widehat{C}_\ell Y(\log Y)^{2\ell-3}$, where one can write down the constant $\widehat{C}_\ell$ exactly as in (\ref{E:const}).
It follows that we recover the constant in the theorem as
$$\widehat{C}_\ell/C_\ell=(\ell-1)^{-\ell+1}\widehat{D}_\ell/D_\ell=:B_\ell\,.$$
\end{proof}

\section{Acknowledgement}

This research was conducted as part of the Research Experience for Undergraduates and Teachers
program
at California State University, Chico, supported by NSF grant DMS-2244020.

\bibliography{genus}{}
\bibliographystyle{plain}

\end{document}